\title{Convexity of Optimization Curves: Local Sharp Thresholds,\\
Robustness Impossibility, and New Counterexamples}
\author{Le Duc Hieu\\
\small Telecom SudParis\\
\small \texttt{duc-hieu.le@telecom-sudparis.eu}}
\date{\today}
\theoremstyle{plain}
\newtheorem{theorem}{Theorem}[section]
\newtheorem{proposition}[theorem]{Proposition}
\newtheorem{lemma}[theorem]{Lemma}
\theoremstyle{remark}
\newcommand{\R}{\mathbb{R}}
\newcommand{\norm}[1]{\left\lVert #1\right\rVert}
\begin{document}
\maketitle

\begin{abstract}
We study when the \emph{optimization curve} of first--order methods---the sequence $\{f(x_n)\}_{n\ge0}$ produced by constant--stepsize iterations---is convex (equivalently, when the forward differences $f(x_n)-f(x_{n+1})$ are nonincreasing). Recent work gives a sharp characterization for \emph{exact} gradient descent (GD) on convex $L$--smooth functions: the curve is convex for all stepsizes $\eta\le 1.75/L$, and this threshold is tight; gradient norms are nonincreasing for all $\eta\le 2/L$; and in continuous time (gradient flow) the curve is always convex \cite{BarzilaiShamirZamani2025}. These results complement the classical smooth convex optimization toolbox \cite{Nesterov2004,Bubeck2015,Nesterov2018} and are in line with worst--case/PEP analyses \cite{DroriTeboulle2014,TaylorHendrickxGlineur2017} and continuous--time viewpoints \cite{SuBoydCandes2016,AttouchPeypouquet2016}.

We contribute: (I) an impossibility theorem for \emph{relative} inexact gradients showing no positive universal stepsize preserves curve convexity uniformly even for $1$--D quadratics (connecting to inexact oracle models \cite{DevolderGlineurNesterov2014,DaspremontJaggi2013}); (II) a \emph{local} smoothness extension that yields convexity for $\eta\le 1.75/L_{\mathrm{eff}}$ when $\nabla^2 f$ is uniformly majorized on the sublevel set $S=\{x:f(x)\le f(x_0)\}$ (a sublevel--set refinement of descent--lemma style arguments \cite{Nesterov2004,Bubeck2015}); (III) a quadratic folklore proposition showing that for $f(x)=\tfrac12 x^\top Qx$ the GD value sequence is nonincreasing and convex for all $\eta$ with $\eta\lambda_i\in[0,2]$ (hence for all $\eta\le 2/L$), and this is tight; (IV) two new counterexamples/no--go principles, including a two--step gradient--difference scheme that \emph{robustly} breaks convexity on an entire stepsize interval for every nonzero initialization, contrasting with classical momentum/Heavy--Ball \cite{Polyak1964} and with accelerated variants \cite{Nesterov2013}.
\end{abstract}

\section{Preliminaries}

\paragraph{GD and the optimization curve.}
For a differentiable $f:\R^d\to\R$ and stepsize $\eta>0$, GD is
\begin{equation}\label{eq:GD}
x_{n+1}=x_n-\eta \nabla f(x_n),\qquad n\ge0.
\end{equation}
We denote $\Delta_n:=f(x_n)-f(x_{n+1})$. The setup is standard in smooth convex optimization \cite{Nesterov2004,Bubeck2015,Nesterov2018}.

\paragraph{Discrete convexity equivalence.}
We use the standard equivalence for real sequences.

\begin{lemma}[Discrete convexity via forward differences]\label{lem:disc}
A sequence $\{a_n\}_{n\ge0}$ is convex on $\mathbb{Z}_{\ge0}$ (i.e., $a_{n+1}-a_n\le a_{n+2}-a_{n+1}$ for all $n$) if and only if $\{\Delta_n\}_{n\ge0}$ with $\Delta_n:=a_n-a_{n+1}$ is nonincreasing, i.e., $\Delta_{n+1}\le \Delta_n$ for all $n$.
\end{lemma}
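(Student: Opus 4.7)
The statement is a purely algebraic identity between two rewritings of the same inequality, so the plan is very short. I would simply verify that the two inequalities are equivalent by a direct substitution, working in both directions to make the ``if and only if'' explicit.

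Concretely, I would start from the definition $\Delta_n = a_n - a_{n+1}$, so that $\Delta_{n+1} = a_{n+1} - a_{n+2}$. Then the inequality $\Delta_{n+1} \le \Delta_n$ reads $a_{n+1} - a_{n+2} \le a_n - a_{n+1}$. Rearranging by moving $a_{n+1}$ to one side and $a_{n+2}$ to the other gives $a_{n+1} - a_n \le a_{n+2} - a_{n+1}$, which is exactly the convexity condition. Every step is reversible (each manipulation is an addition or subtraction of the same quantity on both sides), so the equivalence holds for each fixed $n \ge 0$, and therefore also as a universally quantified statement over $n$.

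There is essentially no obstacle: no case analysis, no limiting argument, no hypothesis beyond the definition of $\Delta_n$. The only thing worth noting in the write-up is that the equivalence is term-by-term, so convexity holds on all of $\mathbb{Z}_{\ge 0}$ if and only if the monotonicity of $\{\Delta_n\}$ holds on all of $\mathbb{Z}_{\ge 0}$; no induction is needed. I would keep the proof to two or three lines of displayed equations and a single sentence stating that each implication follows by reversing the algebra.
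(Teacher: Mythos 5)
Your proposal is correct and follows essentially the same route as the paper: both proofs observe that $\Delta_{n+1}\le\Delta_n$ and $a_{n+1}-a_n\le a_{n+2}-a_{n+1}$ are rearrangements of the same inequality (the paper phrases this via the identity $a_{n+2}-2a_{n+1}+a_n=-(\Delta_{n+1}-\Delta_n)$, you via direct term-by-term substitution). Nothing is missing.
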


\begin{proof}
We have $a_{n+2}-a_{n+1}-(a_{n+1}-a_n)=(a_{n+2}-a_{n+1})-(a_{n+1}-a_n)=-(\Delta_{n+1}-\Delta_n)$. Thus $a_{n+1}-a_n\le a_{n+2}-a_{n+1}$ for all $n$ iff $0\le a_{n+2}-2a_{n+1}+a_n=-(\Delta_{n+1}-\Delta_n)$ for all $n$, i.e., iff $\Delta_{n+1}\le \Delta_n$ for all $n$.
\end{proof}

\paragraph{Sharp reference facts.}
For convex $L$--smooth $f$, (i) GD yields a convex optimization curve for $\eta\le 1.75/L$, and there are counterexamples with nonconvex curves for all $\eta\in(1.75/L,2/L)$; (ii) $\{\|\nabla f(x_n)\|\}$ is nonincreasing for all $\eta\le 2/L$; (iii) gradient flow has convex $t\mapsto f(x(t))$ \cite{BarzilaiShamirZamani2025}. Item (ii) reflects cocoercivity of the gradient for $L$--smooth convex functions \cite{BaillonHaddad1977}; item (iii) aligns with the Lyapunov/energy perspective on continuous--time limits \cite{SuBoydCandes2016,AttouchPeypouquet2016}.

\section{Impossibility under relative inexactness}

We consider the inexact gradient model with \emph{relative} error:
\begin{equation}\label{eq:inexact}
x_{n+1}=x_n-\eta\bigl(\nabla f(x_n)+e_n\bigr),\qquad \norm{e_n}\le \delta \norm{\nabla f(x_n)},\quad \delta\in(0,1).
\end{equation}
Relative and absolute oracle models are classical; see, e.g., \cite{DevolderGlineurNesterov2014,DaspremontJaggi2013}.

\begin{theorem}[No universal convexity--preserving stepsize]\label{thm:imposs}
Fix $L>0$ and $\delta\in(0,1)$. There is \emph{no} $\eta_{\max}(\delta,L)>0$ such that for every one--dimensional convex $L$--smooth quadratic $f$, every $x_0\ne0$, every admissible noise sequence with $\norm{e_n}\le\delta\norm{\nabla f(x_n)}$, and every $0<\eta\le \eta_{\max}(\delta,L)$, the optimization curve $\{f(x_n)\}$ is convex.
\end{theorem}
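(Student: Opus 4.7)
The plan is adversarial: for any candidate $\eta_{\max}(\delta,L)>0$ I would exhibit a single quadratic, an initialization $x_0\ne0$, a stepsize $\eta\le \eta_{\max}$, and a valid relative noise sequence making $\Delta_1>\Delta_0$, which violates convexity by \cref{lem:disc}. The canonical hard instance is the extremal one--dimensional $L$--smooth convex quadratic $f(x)=\tfrac{L}{2}x^2$, so $\nabla f(x)=Lx$. Writing $e_n=\epsilon_n L x_n$ with $|\epsilon_n|\le\delta$, the iteration collapses to $x_{n+1}=\rho_n x_n$ where $\rho_n := 1-\eta L(1+\epsilon_n)$, and the adversary may place each $\rho_n$ independently anywhere in the interval $I_\eta := [1-\eta L(1+\delta),\,1-\eta L(1-\delta)]$.

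Next I would rewrite the forward differences using these multipliers. From $f(x_n)=\tfrac{L}{2}x_n^2$ and $x_{n+1}=\rho_n x_n$ one gets $\Delta_n=\tfrac{L}{2}x_n^2(1-\rho_n^2)$ and $\Delta_{n+1}=\tfrac{L}{2}x_n^2\,\rho_n^2(1-\rho_{n+1}^2)$, hence
\[
\Delta_1-\Delta_0 \;=\; \tfrac{L}{2}\,x_0^2\,\bigl(\rho_0^2(2-\rho_1^2)-1\bigr),
\]
so nonconvexity at index $0$ is equivalent to $\rho_0^2(2-\rho_1^2)>1$. The adversarial choice is $\epsilon_0=-\delta$, $\epsilon_1=+\delta$, i.e.\ underestimate the gradient at step $0$ and overestimate it at step $1$, yielding $\rho_0=1-\eta L(1-\delta)$ and $\rho_1=1-\eta L(1+\delta)$. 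A first--order expansion at $\eta=0$ (just multiply out) gives
\[
\rho_0^2(2-\rho_1^2)-1 \;=\; 4\delta\,\eta L \;+\; O\bigl((\eta L)^2\bigr),
\]
which is strictly positive on some maximal interval $\eta\in(0,\eta^{*}(\delta,L)]$. Given any proposed threshold $\eta_{\max}>0$, setting $\eta:=\min\{\eta_{\max},\eta^{*}\}$ together with the construction above breaks convexity, proving the impossibility claim.

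The argument is not technically deep; the only mild obstacle is controlling the Taylor remainder so that the leading term $4\delta\,\eta L$ dominates uniformly on some explicit interval (one can, e.g., bound the $O((\eta L)^2)$ terms crudely to pin down $\eta^{*}$). The conceptual point worth emphasizing is \emph{why} no small $\eta$ can help: the relative model lets the adversary flip the sign of $\epsilon_n$ independently at each iteration, so the "small step then large step" alternation responsible for $\Delta_1>\Delta_0$ cannot be ruled out, no matter how conservatively we shrink $\eta$. Finally, both $|\rho_0|$ and $|\rho_1|$ lie strictly in $(0,1)$ for small $\eta$, so iterates remain nonzero and every quantity above is nondegenerate.
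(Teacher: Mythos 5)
Your proposal is correct and takes essentially the same route as the paper: the same extremal quadratic $f(x)=\tfrac{L}{2}x^2$, the same adversarial two--step noise pattern $\epsilon_0=-\delta$, $\epsilon_1=+\delta$, and the same conclusion that $\Delta_1-\Delta_0$ has a strictly positive leading term of order $\delta\,\eta L$ as $\eta\to0^+$, hence is positive for all sufficiently small $\eta$. The only cosmetic difference is bookkeeping: the paper factors the second difference as $Lx_0^2\,\alpha\,S(\alpha)$ and invokes continuity with $S(0)=-2\delta<0$, whereas you expand $\rho_0^2(2-\rho_1^2)-1=4\delta\,\eta L+O((\eta L)^2)$; both settle the sign near $\eta=0$ without needing any explicit remainder control since the expression is a polynomial in $\eta L$.
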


\begin{proof}
Let $f(x)=\tfrac{L}{2}x^2$, so $\nabla f(x)=Lx$. Parameterize multiplicative noise as $e_n=\varepsilon_n Lx_n$ with $|\varepsilon_n|\le \delta$. Then the update is
\[
x_{n+1}=x_n-\eta L(1+\varepsilon_n)x_n=(1-\alpha(1+\varepsilon_n))x_n,\qquad \alpha:=\eta L.
\]
Fix $x_0\ne0$ and set $\varepsilon_0=-\delta$, $\varepsilon_1=+\delta$. Writing $\theta_0:=1-\delta$, $\theta_1:=1+\delta$,
\[
x_1=x_0(1-\alpha\theta_0),\qquad x_2=x_1(1-\alpha\theta_1)=x_0(1-\alpha\theta_0)(1-\alpha\theta_1).
\]
Since $f(x_n)=\tfrac{L}{2}x_n^2$, the forward differences satisfy
\[
\Delta_0=\tfrac{L}{2}(x_0^2-x_1^2)=Lx_0^2\Bigl(\alpha\theta_0-\tfrac{\alpha^2\theta_0^2}{2}\Bigr),\quad
\Delta_1=\tfrac{L}{2}(x_1^2-x_2^2)=Lx_0^2(1-\alpha\theta_0)^2\Bigl(\alpha\theta_1-\tfrac{\alpha^2\theta_1^2}{2}\Bigr).
\]
Thus
\[
\Delta_0-\Delta_1=Lx_0^2\,\alpha\,S(\alpha),
\quad
S(\alpha)=\theta_0\Bigl(1-\tfrac{\alpha\theta_0}{2}\Bigr)-\theta_1(1-\alpha\theta_0)^2\Bigl(1-\tfrac{\alpha\theta_1}{2}\Bigr).
\]
We have $S(0)=\theta_0-\theta_1=-2\delta<0$, and $S$ is continuous in $\alpha$, hence there exists $\alpha^\ast>0$ with $S(\alpha)<0$ for all $\alpha\in(0,\alpha^\ast)$. For any $\eta\in(0,\alpha^\ast/L]$, we obtain $\Delta_0-\Delta_1=Lx_0^2\,\alpha\,S(\alpha)<0$, i.e., $\Delta_0<\Delta_1$, so by \cref{lem:disc} the curve is not convex. Since this holds for arbitrarily small $\eta>0$, no positive universal $\eta_{\max}(\delta,L)$ exists.
\end{proof}

\section{Local smoothness extension of the sharp threshold}

\begin{theorem}[Local convexity via sublevel--set smoothness]\label{thm:local}
Let $f:\R^d\to\R$ be convex and $C^2$ on the sublevel set $S:=\{x:\ f(x)\le f(x_0)\}$. 
Suppose there exist $\kappa>0$ and $A=A^\top\succeq0$ with $L_A:=\lambda_{\max}(A)$ such that
\begin{equation}\label{eq:hessbound}
\nabla^2 f(x)\preceq \kappa A,\qquad \forall x\in S.
\end{equation}
Let $L_{\mathrm{eff}}:=\kappa L_A$. Then for every constant stepsize
\[
\eta\in\bigl(0,\,1.75/L_{\mathrm{eff}}\bigr],
\]
the GD optimization curve $\{f(x_n)\}$ is convex.
\end{theorem}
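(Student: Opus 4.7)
The plan is to reduce \cref{thm:local} to the global sharp threshold of \cite{BarzilaiShamirZamani2025} by showing (a) the GD iterates never leave $S$, and (b) along the trajectory $f$ satisfies the same convexity and quadratic upper bounds as a globally $L_{\mathrm{eff}}$-smooth convex function. The unlocking observation is that $A\preceq\lambda_{\max}(A)I$, so \eqref{eq:hessbound} upgrades to $\nabla^2 f(x)\preceq L_{\mathrm{eff}} I$ on $S$; convexity (hence segment-connectedness) of $S$ then promotes this sublevel-set bound to a segment-wise smoothness statement along each GD step.

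First I would prove by induction that $x_n\in S$ and $f(x_{n+1})\le f(x_n)$ for all $n\ge 0$. The base case is immediate. For the inductive step, fix $n$ with $x_n\in S$, set $\gamma(t):=x_n-t\eta\nabla f(x_n)$, $\phi(t):=f(\gamma(t))$, and let $T:=\sup\{t\in[0,1]:\gamma([0,t])\subseteq S\}$; closedness of $S$ and $x_n\in S$ yield $T>0$ and $\gamma(T)\in S$. On $[0,T]$, \eqref{eq:hessbound} gives $\phi''(s)\le\eta^2 L_{\mathrm{eff}}\norm{\nabla f(x_n)}^2$, so double integration produces
\[
\phi(t)\le f(x_n)-t\eta\norm{\nabla f(x_n)}^2\Bigl(1-\tfrac{t\eta L_{\mathrm{eff}}}{2}\Bigr),\qquad t\in[0,T].
\]
Since $\eta\le 1.75/L_{\mathrm{eff}}$ the parenthetical is bounded below by $1/8$ on $[0,1]$, so (assuming $\nabla f(x_n)\ne 0$, else the iteration stagnates and convexity is trivial) $\phi(T)<f(x_n)\le f(x_0)$; by continuity this places $\gamma(T)$ in the open set $\{f<f(x_0)\}\subseteq\operatorname{int}(S)$, which by maximality of $T$ forces $T=1$. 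Hence $x_{n+1}\in S$ with $f(x_{n+1})\le f(x_n)$.

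Second, evaluating the estimate above at $t=1$ is exactly the descent lemma
\[
f(x_{n+1})\le f(x_n)+\ip{\nabla f(x_n)}{x_{n+1}-x_n}+\tfrac{L_{\mathrm{eff}}}{2}\norm{x_{n+1}-x_n}^2
\]
for each $n$, while global convexity of $f$ supplies the inequalities $f(x_j)-f(x_k)\ge\ip{\nabla f(x_k)}{x_j-x_k}$ at iterate pairs. These are precisely the two ingredients driving the sharp-threshold proof of \cite{BarzilaiShamirZamani2025}; re-running that argument with $L\rightsquigarrow L_{\mathrm{eff}}$ and the iterates $\{x_n\}\subseteq S$ in the role of arbitrary points yields $\Delta_{n+1}\le\Delta_n$ whenever $\eta\le 1.75/L_{\mathrm{eff}}$, and \cref{lem:disc} converts this into convexity of $\{f(x_n)\}$.

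The main obstacle is the last reduction, since the sharp threshold is invoked as a black box. One route is the inequality-extraction sketched above, which requires trusting that the proof of \cite{BarzilaiShamirZamani2025} uses only the descent lemma and convexity at pairs of iterates. The alternative is to construct a globally convex, $L_{\mathrm{eff}}$-smooth $\tilde f$ agreeing with $f$ on a neighborhood of the trajectory and apply their theorem to $\tilde f$; natural projection-based extensions of the form $\tilde f(x):=f(P_S(x))+\ip{\nabla f(P_S(x))}{x-P_S(x)}+\tfrac{L_{\mathrm{eff}}}{2}\norm{x-P_S(x)}^2$ are convex and $L_{\mathrm{eff}}$-smooth only in the $C^{1,1}$ sense, but since GD never leaves $S$ only values and gradients of $f$ itself are ever evaluated along the run, so this regularity suffices.
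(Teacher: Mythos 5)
Your proposal is correct and follows essentially the same route as the paper: Step~1 establishes forward invariance of $S$ by a one--dimensional continuity argument along the GD segment using the local descent bound $\phi(t)\le f(x_n)-t\eta\norm{\nabla f(x_n)}^2\bigl(1-\tfrac{t\eta L_{\mathrm{eff}}}{2}\bigr)$ (you phrase it via a maximal invariance time $T$, the paper via a first--exit--time contradiction, and it proves invariance for all $\eta<2/L_{\mathrm{eff}}$ rather than only $\eta\le 1.75/L_{\mathrm{eff}}$), and Step~2 invokes the sharp $1.75/L$ threshold of \cite{BarzilaiShamirZamani2025} as a black box applied to $f|_S$ with $L\rightsquigarrow L_{\mathrm{eff}}$. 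Your only real deviations are minor: the justification of $T>0$ should rest on $\phi'(0)=-\eta\norm{\nabla f(x_n)}^2<0$ rather than on closedness of $S$, and your explicit discussion of why the black--box theorem legitimately applies to a function that is only $L_{\mathrm{eff}}$--smooth on $S$ is actually more careful than the paper's one--line assertion.
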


\begin{proof}
We give full details in two steps. The argument refines standard smoothness/descent--lemma techniques on convex domains \cite{Nesterov2004,Bubeck2015,Nesterov2018}.

\emph{Step 1 (forward invariance $x_n\in S$).}
We claim that for any $\eta\in(0,2/L_{\mathrm{eff}})$ the GD iterates remain in $S$. This will suffice since $(0,1.75/L_{\mathrm{eff}}]\subset(0,2/L_{\mathrm{eff}})$.

Fix $n\ge0$ and suppose $x_n\in S$. If $\nabla f(x_n)=0$ then $x_{n+1}=x_n\in S$. Otherwise put
\[
g:=\nabla f(x_n)\ne0,\qquad r(t):=x_n-tg,\qquad \phi(t):=f(r(t)),\quad t\ge0.
\]
Note $\phi$ is $C^2$ on a neighborhood of $[0,\eta]$. We have
\[
\phi'(t)=-\norm{g}^2+\int_0^t \phi''(s)\,ds,\qquad
\phi''(t)=g^\top \nabla^2 f(r(t))\,g.
\]
Assume for contradiction that $x_{n+1}\notin S$, i.e., $\phi(\eta)>f(x_0)$. Since $\phi(0)=f(x_n)\le f(x_0)$, by continuity there exists the minimal $t_\ast\in(0,\eta]$ with $\phi(t_\ast)=f(x_0)$ and $\phi(t)<f(x_0)$ on $[0,t_\ast)$. By minimality, the segment $\{r(t):t\in[0,t_\ast]\}$ lies in $S$, so by \eqref{eq:hessbound},
\[
\phi''(t)=g^\top \nabla^2 f(r(t))\,g\le g^\top (\kappa A)\,g\le \kappa L_A \norm{g}^2=L_{\mathrm{eff}}\norm{g}^2,\qquad \forall t\in[0,t_\ast].
\]
Integrating twice and using $\phi'(0)=-\norm{g}^2$, we obtain for all $t\in[0,t_\ast]$,
\begin{equation}\label{eq:quad-bound}
\phi(t)\le \phi(0)-t\Bigl(1-\tfrac{L_{\mathrm{eff}}t}{2}\Bigr)\norm{g}^2.
\end{equation}
Taking $t=t_\ast\in(0,\eta]\subset(0,2/L_{\mathrm{eff}})$ yields $1-\tfrac{L_{\mathrm{eff}}t_\ast}{2}>0$, so the right-hand side of \eqref{eq:quad-bound} is \emph{strictly} less than $\phi(0)\le f(x_0)$, contradicting $\phi(t_\ast)=f(x_0)$. Hence $x_{n+1}\in S$. By induction $x_n\in S$ for all $n$.

\emph{Step 2 (discrete convexity on $S$).}
Because $x_n\in S$ for all $n$, each segment $[x_n,x_{n+1}]$ is contained in $S$, and $f|_S$ is $L_{\mathrm{eff}}$--smooth on the convex domain $S$. Therefore, the sharp GD result for convex $L$--smooth functions applies to $f|_S$ with $L=L_{\mathrm{eff}}$: for any $\eta\le 1.75/L_{\mathrm{eff}}$, the optimization curve is convex \cite[Thm.~1]{BarzilaiShamirZamani2025}.
\end{proof}

\section{Quadratics: a folklore proposition (exact range)}

\begin{proposition}[Quadratic GD is convex up to $2/L$]\label{prop:quad}
Let $f(x)=\tfrac12 x^\top Qx$ with $Q\succeq0$, and $L=\lambda_{\max}(Q)$ (with $L=0$ allowed).
For any $x_0$ and any stepsize $\eta\ge0$ with $\eta\lambda_i\in[0,2]$ for every eigenvalue $\lambda_i$ of $Q$ (in particular, any $\eta\in[0,2/L]$ when $L>0$), the GD values $n\mapsto f(x_n)$ are nonincreasing and convex; equivalently, $\Delta_n\ge0$ and $\Delta_{n+1}\le \Delta_n$ for all $n\ge0$. The range is exact: for $\eta>2/L$, $\{f(x_n)\}$ may diverge.
\end{proposition}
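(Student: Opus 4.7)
The plan is to diagonalize and reduce to a scalar geometric analysis. Since $Q\succeq 0$ is symmetric, write $Q=U^\top \Lambda U$ with $U$ orthogonal and $\Lambda=\mathrm{diag}(\lambda_1,\dots,\lambda_d)$, $\lambda_i\ge 0$. Setting $y_n:=Ux_n$, the iteration \eqref{eq:GD} decouples coordinatewise into
\[
y_{n,i}=(1-\eta\lambda_i)^n y_{0,i},\qquad i=1,\dots,d,
\]
and $f(x_n)=\tfrac12 y_n^\top \Lambda y_n=\tfrac12\sum_i \lambda_i (1-\eta\lambda_i)^{2n} y_{0,i}^2$. So the first step is to expose $f(x_n)$ as a nonnegative mixture of geometric sequences.

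Next, introduce $\rho_i:=(1-\eta\lambda_i)^2$ and $c_i:=\tfrac12\lambda_i y_{0,i}^2\ge0$, so that $f(x_n)=\sum_i c_i \rho_i^n$. The hypothesis $\eta\lambda_i\in[0,2]$ means exactly $\rho_i\in[0,1]$. A direct computation then gives
\[
\Delta_n=\sum_i c_i(1-\rho_i)\rho_i^n,\qquad \Delta_n-\Delta_{n+1}=\sum_i c_i(1-\rho_i)^2\rho_i^n.
\]
Both sums are nonnegative termwise under the stepsize condition, yielding $\Delta_n\ge0$ (monotone descent) and $\Delta_{n+1}\le\Delta_n$ (convexity, via \cref{lem:disc}). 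This is the whole argument: no smoothness/descent lemma is needed, because the exact diagonalization handles each mode independently.

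For the sharpness claim, fix any eigenpair $(L,v)$ with $Qv=Lv$ and $L>0$, and take $x_0=v$. Then $x_n=(1-\eta L)^n v$ and $f(x_n)=\tfrac{L}{2}(1-\eta L)^{2n}$. If $\eta>2/L$, then $|1-\eta L|>1$, so $f(x_n)\to\infty$ geometrically; in particular the curve is neither nonincreasing nor convex (nor even bounded). The edge case $L=0$ is trivial since $f\equiv 0$. I do not anticipate any real obstacle; the only subtlety worth flagging is that the decoupling converts the apparently joint condition ``$\eta\le 2/L$'' into the pointwise spectral condition ``$\eta\lambda_i\in[0,2]$ for all $i$,'' which is what the statement asserts and what the termwise sign analysis requires.
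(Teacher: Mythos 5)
Your proposal is correct and follows essentially the same route as the paper's proof: diagonalize $Q$, express $f(x_n)$ as a nonnegative mixture of geometric sequences with ratios $\rho_i=(1-\eta\lambda_i)^2\in[0,1]$, and verify $\Delta_n\ge0$ and $\Delta_n-\Delta_{n+1}\ge0$ termwise (your $2c_i(1-\rho_i)$ is exactly the paper's $\gamma_i$). Your sharpness argument is also the paper's, with the minor improvement that you explicitly initialize at a top eigenvector so the divergent mode is actually excited.
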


\begin{proof}
Diagonalize $Q=U\Lambda U^\top$ with $\Lambda=\mathrm{diag}(\lambda_1,\dots,\lambda_d)$, $\lambda_i\ge0$, and set $y_n:=U^\top x_n$. GD gives $y_{n+1}=(I-\eta\Lambda)y_n$, hence $y_{n,i}=(1-\eta\lambda_i)^n y_{0,i}$. Then
\[
f(x_n)=\tfrac12 y_n^\top \Lambda y_n=\tfrac12\sum_{i=1}^d \lambda_i (1-\eta\lambda_i)^{2n} y_{0,i}^2.
\]
Define $s_i:=(1-\eta\lambda_i)^2$ and note that under $\eta\lambda_i\in[0,2]$ we have $s_i\in[0,1]$. A direct computation gives
\[
\Delta_n=f(x_n)-f(x_{n+1})=\tfrac12\sum_{i=1}^d \underbrace{\eta\lambda_i^2(2-\eta\lambda_i)y_{0,i}^2}_{\gamma_i\ge0}\, s_i^{\,n}.
\]
Thus $\Delta_n\ge0$ and
\[
\Delta_{n+1}-\Delta_n=\tfrac12\sum_{i=1}^d \gamma_i s_i^{\,n}(s_i-1)\le0,
\]
so by \cref{lem:disc} the sequence $\{f(x_n)\}$ is nonincreasing and convex. This folklore analysis is consistent with classical treatments of quadratic optimization \cite{Nesterov2004,Bubeck2015}. If $\eta>2/L$ then for some $i$ we have $|1-\eta\lambda_i|>1$, and the $i$th term grows geometrically so $f(x_n)\to\infty$, showing exactness.
\end{proof}

\section{New counterexamples and no--go principles}

\subsection{A two--step gradient--difference scheme fails on a whole interval}

Consider the two--step scheme
\begin{equation}\label{eq:twostep}
x_{n+1}=x_n-\eta\,\nabla f(x_n)-\theta\bigl(\nabla f(x_n)-\nabla f(x_{n-1})\bigr),
\end{equation}
which is distinct from Heavy--Ball (the memory enters via the \emph{gradient difference}); cf.\ the classical momentum/Heavy--Ball method \cite{Polyak1964} and contrast with accelerated gradient schemes \cite{Nesterov2013}.

\begin{proposition}[Interval--robust nonconvexity for \eqref{eq:twostep}]\label{prop:twostep}
Let $f(x)=\tfrac{L}{2}x^2$ in $1$--D. For any $\eta\in[2/(3L),\,1/L)$, set $\theta:=1/L-\eta$ and initialize with $x_{-1}=x_0\neq 0$. Then the piecewise--linear interpolation of $\{f(x_n)\}$ is not convex.
\end{proposition}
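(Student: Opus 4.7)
The plan is to specialize \eqref{eq:twostep} to $f(x)=\tfrac{L}{2}x^2$ and exploit a coefficient cancellation forced by the choice $\theta=1/L-\eta$. Since $\nabla f(x)=Lx$, the update becomes the linear two--step recursion
\[
x_{n+1}=(1-\eta L-\theta L)\,x_n+\theta L\,x_{n-1}.
\]
The key algebraic observation is $(\eta+\theta)L=1$, which makes the $x_n$ coefficient vanish and decouples the recursion into
\[
x_{n+1}=\rho\,x_{n-1},\qquad \rho:=1-\eta L=\theta L.
\]
For $\eta\in[2/(3L),1/L)$ one has $\rho\in(0,1/3]$, so $\rho\in(0,1)$ strictly.

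Next I would unfold the recursion with the symmetric start $x_{-1}=x_0$. The even and odd subsequences each evolve by simple multiplication by $\rho$, giving
\[
x_{2k}=\rho^{k}x_0,\qquad x_{2k+1}=\rho^{k+1}x_0,\qquad k\ge0.
\]
In particular consecutive pairs $(x_{2k+1},x_{2k+2})$ have equal magnitude, hence equal $f$--values. A short computation of the forward differences $\Delta_n=f(x_n)-f(x_{n+1})$ then yields
\[
\Delta_{2k}=\tfrac{L}{2}x_0^2\,\rho^{2k}(1-\rho^2)>0,\qquad \Delta_{2k+1}=0.
\]

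To conclude, I would compare $\Delta_1$ with $\Delta_2$: we have $\Delta_1=0$ while $\Delta_2=\tfrac{L}{2}x_0^2\,\rho^2(1-\rho^2)>0$, so $\Delta_2>\Delta_1$, which violates the nonincreasing forward--difference criterion of \cref{lem:disc}. Hence $\{f(x_n)\}$ is not a convex sequence, and its piecewise--linear interpolation is not a convex function. The failure is \emph{robust} on the whole interval $\eta\in[2/(3L),1/L)$ because $\rho^2(1-\rho^2)$ is bounded away from $0$ there, and it holds for every $x_0\ne0$. There is no real obstacle in this plan; the only point requiring insight is spotting the cancellation $(\eta+\theta)L=1$, after which the rest is elementary bookkeeping.
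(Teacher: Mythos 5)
Your proof is correct and takes essentially the same route as the paper: the cancellation $(\eta+\theta)L=1$ collapses the scheme to $x_{n+1}=\rho\,x_{n-1}$ with $\rho=\theta L=1-\eta L$, and comparing $\Delta_1=0$ with $\Delta_2>0$ contradicts \cref{lem:disc}. One cosmetic slip in your closing remark: $\rho^2(1-\rho^2)$ is \emph{not} bounded away from $0$ on the half-open interval (it vanishes as $\eta\to 1/L$), but this is immaterial since the proposition only needs $\Delta_2>0$ strictly for each fixed $\eta$ in the range.
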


\begin{proof}
Here $\nabla f(x)=Lx$. Put $t:=\eta L$ and $s:=\theta L$. With $\theta=1/L-\eta$ we get $s=1-t$ and $t\in[2/3,1)$. The recurrence \eqref{eq:twostep} becomes
\[
x_{n+1}=x_n-(\eta+\theta)Lx_n+\theta L x_{n-1}
=\bigl(1-(t+s)\bigr)x_n+s\,x_{n-1}
=0\cdot x_n+s\,x_{n-1}
=s\,x_{n-1}.
\]
With $x_{-1}=x_0\ne0$ we have $x_1=sx_0\ne0$, $x_2=sx_0=x_1$, and $x_3=sx_1$. Writing $a_n:=f(x_n)$ and $\Delta_n:=a_n-a_{n+1}$,
\[
\Delta_1=a_1-a_2=f(x_1)-f(x_2)=0,
\]
while
\[
\Delta_2=a_2-a_3=\tfrac{L}{2}\bigl(x_1^2-s^2x_1^2\bigr)=\tfrac{L}{2}(1-s^2)x_1^2=\tfrac{L}{2}\,t(2-t)\,x_1^2>0
\]
since $t\in(0,2)$ and $x_1\ne0$. Thus $\Delta_2>\Delta_1$, violating convexity by \cref{lem:disc}.
\end{proof}

\subsection{No universal second--difference vs.\ gradient--drop bound beyond $1.75/L$}

\begin{proposition}[No-go inequality]\label{prop:nogo}
Fix $L>0$. There exist a convex $L$--smooth $f$, a stepsize $\eta$ with $1.75/L<\eta<2/L$, an initialization $x_0$, and an index $n\ge0$ such that, with $\Delta_n:=f(x_n)-f(x_{n+1})$ along the GD iterates \eqref{eq:GD},
\[
\Delta_n-\Delta_{n+1}
<\eta\Bigl(1-\tfrac{\eta L}{2}\Bigr)\bigl(\|\nabla f(x_{n+1})\|^2-\|\nabla f(x_{n+2})\|^2\bigr).
\]
\end{proposition}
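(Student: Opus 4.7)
The plan is to derive the proposition as a direct corollary of two results already quoted in the preliminaries: the sharp--threshold counterexamples of \cite{BarzilaiShamirZamani2025} at stepsizes $\eta\in(1.75/L,2/L)$, and the cocoercivity--based monotonicity of $\{\|\nabla f(x_n)\|\}$ on the same range. The point is that just above the convexity threshold the second forward difference $\Delta_n-\Delta_{n+1}$ can be made strictly negative, whereas the proposed lower bound is manifestly nonnegative, so no delicate balancing of constants is required.

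Concretely, first I would fix any $\eta\in(1.75/L,\,2/L)$. By the sharp reference fact (i) (\cite[Thm.~1]{BarzilaiShamirZamani2025}) there exist a convex $L$--smooth $f$ and an initialization $x_0$ for which $\{f(x_n)\}$ fails to be convex; by \cref{lem:disc} this gives an index $n\ge0$ with $\Delta_n-\Delta_{n+1}<0$. Second, for the same $\eta\le 2/L$ the sharp reference fact (ii) (a consequence of Baillon--Haddad \cite{BaillonHaddad1977}) yields $\|\nabla f(x_{n+1})\|\ge\|\nabla f(x_{n+2})\|$, hence $\|\nabla f(x_{n+1})\|^2-\|\nabla f(x_{n+2})\|^2\ge0$. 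Since also $\eta>0$ and $1-\eta L/2>0$ (because $\eta<2/L$), the right--hand side of the target inequality is nonnegative, and combining the two observations gives
\[
\Delta_n-\Delta_{n+1}\;<\;0\;\le\;\eta\Bigl(1-\tfrac{\eta L}{2}\Bigr)\bigl(\|\nabla f(x_{n+1})\|^2-\|\nabla f(x_{n+2})\|^2\bigr),
\]
which is the desired strict inequality.

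The main obstacle is conceptual rather than computational: one has to notice that on the entire window $(1.75/L,2/L)$ the two phenomena decouple, namely the optimization curve may already fail to be convex while gradient norms are still forced to be nonincreasing. This mismatch is exactly what rules out any descent--lemma--style bound of the second $f$--difference from below by a positive multiple of the gradient--norm drop with the "natural" coefficient $\eta(1-\eta L/2)$. If an explicit witness is wanted instead of the black--box appeal to \cite{BarzilaiShamirZamani2025}, one can recycle the Huber--type or piecewise--quadratic construction used there to prove sharpness at the chosen $\eta$; the remainder of the argument above is unchanged.
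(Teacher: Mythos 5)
Your proposal is correct and follows exactly the paper's own argument: invoke the sharpness counterexample of \cite[Thm.~1]{BarzilaiShamirZamani2025} to make $\Delta_n-\Delta_{n+1}<0$ at some $\eta\in(1.75/L,2/L)$, then use the gradient--norm monotonicity up to $2/L$ together with $\eta(1-\eta L/2)>0$ to see that the right--hand side is nonnegative. No differences worth noting.
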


\begin{proof}
Let $\eta\in(1.75/L,2/L)$. By \cite[Thm.~1]{BarzilaiShamirZamani2025}, there exists a convex $L$--smooth $f$ and an $x_0$ such that the GD value curve is \emph{not} convex, i.e., for some $n$ we have $\Delta_n-\Delta_{n+1}<0$. On the other hand, \cite[Thm.~3]{BarzilaiShamirZamani2025} shows that $\{\|\nabla f(x_k)\|\}$ is nonincreasing for all $\eta\le 2/L$, a consequence of gradient cocoercivity \cite{BaillonHaddad1977}; hence $\|\nabla f(x_{n+1})\|^2-\|\nabla f(x_{n+2})\|^2\ge0$. Since $\eta(1-\eta L/2)>0$ on $(0,2/L)$, the right-hand side of the displayed inequality is nonnegative, while the left-hand side is negative for the chosen triple $(f,\eta,n)$; hence the strict inequality holds.
\end{proof}

\section{Discussion}
Our results localize the sharp GD threshold via $L_{\mathrm{eff}}$ on a sublevel set, and show fragility of discrete convexity under relative inexactness and under a simple gradient--difference two--step modification. They also clarify limits of controlling second differences by future gradient drops beyond $1.75/L$. The phenomena dovetail with the PEP/worst--case literature \cite{DroriTeboulle2014,TaylorHendrickxGlineur2017}, classical smooth convex analyses \cite{Nesterov2004,Bubeck2015,Nesterov2018}, inexact--oracle frameworks \cite{DevolderGlineurNesterov2014,DaspremontJaggi2013}, and continuous--time perspectives on gradient dynamics and acceleration \cite{SuBoydCandes2016,AttouchPeypouquet2016}. Finally, our two--step counterexample contrasts with momentum/Heavy--Ball \cite{Polyak1964} and with composite/accelerated schemes \cite{Nesterov2013}, highlighting that seemingly mild multi--step gradient modifications can qualitatively alter value--sequence convexity.

\end{document}